
\documentclass[graybox]{svmult}

\usepackage{amsmath,amssymb}
\usepackage{enumerate,multirow}
\usepackage{mathptmx}       
\usepackage{helvet}         
\usepackage{courier}        
\usepackage{type1cm}        
%
\usepackage{makeidx}         
\usepackage{graphicx}        
\usepackage{multicol}        
\usepackage[bottom]{footmisc}



\begin{document}
\newcommand{\eps}{\varepsilon}
\newcommand{\Prob}{\mathsf P}
\newcommand{\R}{\mathbb R}
\newcommand{\Beta}{B}

\newcommand*{\abs}[1]{\left\vert#1\right\vert}
\newcommand*{\aabs}[1]{\big\vert#1\big\vert}
\newcommand*{\set}[1]{\left\{#1\right\}}
\newcommand{\ex}[1]{\mathsf{E}\left[\,#1\,\right]}
\newcommand{\norm}[1]{\left\lVert#1\right\rVert}

\title*{Asymptotic Properties of Drift Parameter Estimator Based on
Discrete Observations of Stochastic Differential Equation Driven
by Fractional Brownian Motion}
\titlerunning{Properties of Drift Parameter Estimator for SDE Driven
by fBm}
\authorrunning{Yuliya Mishura et al}
\author{Yuliya Mishura, Kostiantyn Ral'chenko, Oleg Seleznev, and Georgiy Shevchenko}
\institute{Yuliya Mishura \at Department of Probability Theory, Statistics and Actuarial Mathematics, Taras Shevchenko National University of Kyiv, 64 Volodymyrska, 01601 Kyiv, Ukraine \email{myus@univ.kiev.ua}\and Kostiantyn Ral'chenko \at Department of Probability Theory, Statistics and Actuarial Mathematics, Taras Shevchenko National University of Kyiv, 64 Volodymyrska, 01601 Kyiv, Ukraine \email{k.ralchenko@gmail.com} \and Oleg Seleznev \at Institute of  Mathematics and Mathematical Statistics, University of Umea,
SE-90187 Umea, Sweden \email{oleg.seleznjev@math.umu.se} \and Georgiy Shevchenko \at Department of Probability Theory, Statistics and Actuarial Mathematics, Taras Shevchenko National University of Kyiv, 64 Volodymyrska, 01601 Kyiv, Ukraine \email{zhora@univ.kiev.ua}}
\maketitle

\abstract*{In this chapter, we consider a problem of statistical estimation of an unknown drift parameter for a stochastic differential equation driven by fractional Brownian motion. Two estimators based on discrete observations of solution to the stochastic differential equations are constructed. It is proved that the estimators converge almost surely to the parameter value as the observation interval expands and the distance between observations vanishes. A bound for the rate of convergence is given and numerical simulations are presented. As an auxilliary result of independent interest we establish global estimates for fractional derivative of fractional Brownian motion.
}

\abstract{In this chapter, we consider a problem of statistical estimation of an unknown drift parameter for a stochastic differential equation driven by fractional Brownian motion. Two estimators based on discrete observations of solution to the stochastic differential equations are constructed. It is proved that the estimators converge almost surely to the parameter value, as the observation interval expands and the distance between observations vanishes. A bound for the rate of convergence is given and numerical simulations are presented. As an auxilliary result of independent interest we establish global estimates for fractional derivative of fractional Brownian motion.
}
\section*{Introduction}
A fractional Brownian motion (fBm) with the Hurst parameter $H\in(0,1)$ is a centered Gaussian
process $\set{B_t^H,t\ge 0}$  having the covariance $\ex{B_t^HB_s^H}=\frac{1}{2}(s^{2H}+t^{2H}-|t-s|^{2H})$.
Stochastic differential equations driven by an fBm have been a subject of active research for the last two decades. Main reason is that such equations seem to be one of the most suitable tools to model long-range dependence in many applied areas, such as physics, finance, biology,  network studies, etc.

In modeling, the problems of statistical estimation of model parameters are of a particular importance, so the growing number of papers devoted to statistical methods for equations with fractional noise is not surprising. We will cite only few of them, further references can be found in \cite{bishwal,Mishura08,rao-book}. In \cite{kleptsyna}, the authors proposed and studied maximum likelihood estimators for fractional Ornstein--Uhlenbeck process. Related results were obtained in \cite{rao}, where a more general model was considered. In \cite{hu-nualart-stat} the authors proposed a least squares estimator for fractional Ornstein--Uhlenbeck process and proved its asymptotic normality. The estimators constructed in these papers have the same disadvantage: they are based on the whole trajectory of solution to stochastic differential equations, so are not directly implementable. In view of this, estimators based on discrete observations of solutions were proposed in \cite{bertin-tudor-torres, hu-nualart-zhang, tudor-viens, xiao1}. We note that the discretization of the maximum likelihood estimator is extremely involved in the fractional Brownian case, see discussion in \cite{tudor-viens}.

It is worth to mention that papers \cite{hu-nualart-zhang, tudor-viens} deal with the whole range of Hurst parameter $H\in(0,1)$, while other papers cited here investigate only the case $H>1/2$ (which corresponds to long-range dependence);
recall that in the case $H=1/2$, we have a classical diffusion, and there is a huge literature devoted to it; we refer to books \cite{kutoyants,liptser-shiryaev}  for the review of the topic.
We also mention papers \cite{KMM,xiao2}, which deal with parameter estimation in so-called mixed models, involving standard Wiener process along with an fBm.

This chapter deals with statistical estimation of drift parameter for a stochastic differential equation driven by an fBm based on discrete observation of its solution. The model we consider, is fully non-linear, in contrast to \cite{bertin-tudor-torres, hu-nualart-zhang}, which deal with a simple linear model, \cite{xiao1}, devoted to the problem of estimating the parameters for fractional Ornstein–-Uhlenbeck processes from discrete observations, and \cite{tudor-viens}, which studies a model where the fractional Brownian motion enters linearly.
We propose two new estimators and prove their strong consistency under the so-called ``high-frequency data'' assumption that the horizon of observations tends to infinity, while the distance between them goes to zero. Moreover, we obtain almost sure upper bounds for the  rate of convergence of the estimators. The estimators proposed go far away from being maximum likelihood estimators, and this is their crucial advantage, because they keep strong consistency but they are not complicated technically and are convenient for the simulations. This chapter is organized as follows. In Section~\ref{sec:prelim}, we give preliminaries on stochastic integration with respect to an fBm. In this Section, we also give some auxiliary results, which are of independent interest: global estimates for the fractional derivative of an fBm and for  increments of a solution to an fBm-driven stochastic differential equation. In Section~\ref{sec:estim}, we construct  estimators for the drift parameter, prove their strong consistency and establish their rate of convergence. Section~\ref{sec:simul} illustrates our findings with simulation results.

\section{Preliminaries}\label{sec:prelim}

For  a fixed real number $H\in(1/2,1)$, let $\set{B_t^H,t\ge 0}$ be a fractional Brownian motion  with the Hurst parameter $H$ on a complete probability space $(\Omega,\mathcal F,\Prob)$. The integral with respect to the fBm $B^H$ will be understood in the generalized Lebesgue--Stieltjes sense (see \cite{Zah98a}). Its construction uses the fractional derivatives, defined for $a<b$ and $\alpha \in(0,1)$ as
\begin{gather*}
\big(D_{a+}^{\alpha}f\big)(x)=\frac{1}{\Gamma(1-\alpha)}\bigg(\frac{f(x)}{(x-a)^\alpha}+\alpha
\int_{a}^x\frac{f(x)-f(u)}{(x-u)^{1+\alpha}}du\bigg),\\
\big(D_{b-}^{1-\alpha}g\big)(x)=\frac{e^{-i\pi
\alpha}}{\Gamma(\alpha)}\bigg(\frac{g(x)}{(b-x)^{1-\alpha}}+(1-\alpha)
\int_{x}^b\frac{g(x)-g(u)}{(u-x)^{2-\alpha}}du\bigg).
\end{gather*}
Provided that $D_{a+}^{\alpha}f\in L_1[a,b], \ D_{b-}^{1-\alpha}g_{b-}\in
L_\infty[a,b]$, where $g_{b-}(x) = g(x) - g(b)$, the generalized  Lebesgue-Stieltjes integral $\int_a^bf(x)dg(x)$ is defined as
\begin{equation}\label{integr}\int_a^bf(x)dg(x)=e^{i\pi\alpha}\int_a^b\big(D_{a+}^{\alpha}f\big)(x)\big(D_{b-}^{1-\alpha}g_{b-}\big)(x)dx.
\end{equation}
It follows from the H\"older continuity of $B^H$ that for $\alpha\in (1-H,1)$, $D_{b-}^{1-\alpha}B_{b-}^H\in L_\infty[a,b]$ a.s. (we will prove this result in a stronger form further).
Then for a function
$f$ with $D_{a+}^{\alpha}f\in L_1[a,b]$, we can define integral with respect to $B^H$ through  \eqref{integr}:
\begin{equation}\label{integrfbm}
\int_a^b f(x)\,dB^H(x)=e^{i\pi\alpha}\int_a^b (D_{a+}^\alpha f)(x)(D_{b-}^{1-\alpha} B_{b-}^H)(x)\,dx.
\end{equation}

Throughout the paper, the symbol $C$ will denote a generic constant, whose value is not important and may change from one line to another. If a constant depends on some variable parameters, we will put them in subscripts.

\subsection{Estimate of derivative of fractional Brownian motion}
In order to estimate integrals with respect to fractional Brownian motion, we need to estimate the fractional derivative of $B^H$. Let some $\alpha\in(1-H,1/2)$ be fixed in the rest of this paper.
Denote for $t_1<t_2$
$$
Z(t_1,t_2) =  \left(D^{1-\alpha}_{t_2-} B^H_{t_2-}\right)(t_1)= \frac{e^{-i\pi\alpha}}{\Gamma(\alpha)} \left(\frac{B^H_{t_1}-B^H_{t_2}}{(t_2-t_1)^{1-\alpha}}
+(1-\alpha)\int_{t_1}^{t_2}\frac{B^H_{t_1}-B^H_{u}}{(u-t_1)^{2-\alpha}}du\right).
$$
The following proposition is a generalization of \cite[Theorem 3]{KMM}.
\begin{theorem}\label{fracderest}
For any $\gamma>1/2$,  the random variable
\begin{equation}
\xi_{H,\alpha,\gamma}=\sup_{0\le t_1<t_2\le t_1+1}\frac{\abs{Z(t_1,t_2)}}{(t_2-t_1)^{H+\alpha-1} \left(\abs{\log (t_2-t_1)}^{1/2}+1\right)  \left(\log(t_2+2)\right)^\gamma}
\end{equation}
is finite almost surely.
Moreover, there exists $c_{H,\alpha,\gamma}> 0$ such that
$\ex{\exp\set{x \xi_{H,\alpha,\gamma}^2}}<\infty$ for $x<c_{H,\alpha,\gamma}$.
\end{theorem}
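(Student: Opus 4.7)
The plan is to decompose the supremum into local pieces over the regions $\set{(t_1,t_2): n \le t_1 < t_2 \le n+2,\ t_2-t_1 \le 1}$ for integer $n \ge 0$, establish a uniform Gaussian tail estimate for each local supremum via stationarity of the increments of $B^H$, and then combine them by Borel--Cantelli. The factor $(\log(t_2+2))^\gamma$ in the denominator of $\xi_{H,\alpha,\gamma}$ is precisely what makes the resulting sum over $n$ converge.

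First I would establish the variance bound $\ex{\abs{Z(t_1,t_2)}^2}^{1/2} \le C(t_2-t_1)^{H+\alpha-1}$ for $0 < t_2-t_1 \le 1$, by inserting $\ex{\abs{B^H_{t_1}-B^H_u}^2} = (u-t_1)^{2H}$ into the explicit formula for $Z$ and applying Minkowski's inequality to the integral term; integrability of $(u-t_1)^{H+\alpha-2}$ is ensured by $\alpha+H > 1$. Along the same lines, I would derive an $L^2$-bound for the increment $Z(t_1,t_2)-Z(s_1,s_2)$ in terms of $\abs{t_1-s_1}+\abs{t_2-s_2}$, which is the key input for chaining.

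For each integer $n \ge 0$ put
$$\xi_n = \sup_{\substack{n \le t_1 < t_2 \le n+2\\ t_2-t_1 \le 1}} \frac{\abs{Z(t_1,t_2)}}{(t_2-t_1)^{H+\alpha-1}(\abs{\log(t_2-t_1)}^{1/2}+1)}.$$
Since $Z(t_1,t_2)$ is a linear functional of the increments of $B^H$ on $[t_1,t_2]$, stationarity of those increments gives $\xi_n \stackrel{d}{=} \xi_0$. The crucial local step is then the Gaussian tail $\Prob(\xi_0 > x) \le K\exp(-\kappa x^2)$, which I would obtain either by invoking the cited \cite[Theorem 3]{KMM} or directly, via the Borell--TIS concentration inequality, after showing $\ex{\xi_0} < \infty$ through Dudley's entropy integral (or a Garsia--Rodemich--Rumsey estimate) applied to the Gaussian field $Z$ under the $L^2$-metric obtained in the previous step. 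The renormalization $\abs{\log(t_2-t_1)}^{1/2}+1$ is exactly what is needed to tame the logarithmic divergence of the subgaussian tail of $Z(t_1,t_2)/(t_2-t_1)^{H+\alpha-1}$ as $t_2\downarrow t_1$.

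Finally, every $(t_1,t_2)$ with $0 \le t_1 < t_2 \le t_1+1$ lies in $[n,n+2]^2$ for $n=\lfloor t_1\rfloor$, and $\log(t_2+2) \ge \log(n+2)$, whence $\xi_{H,\alpha,\gamma} \le \sup_{n \ge 0} \xi_n/(\log(n+2))^\gamma$. Since $2\gamma > 1$, the tails $\Prob(\xi_n > x(\log(n+2))^\gamma) \le K\exp(-\kappa x^2 (\log(n+2))^{2\gamma})$ are summable in $n$, and Borel--Cantelli yields $\xi_{H,\alpha,\gamma} < \infty$ almost surely. Summing the same bound and using $(\log(n+2))^{2\gamma} \ge 1 + \tfrac12(\log(n+2))^{2\gamma}$ for large $n$ produces a Gaussian tail $\Prob(\xi_{H,\alpha,\gamma} > x) \le Ce^{-cx^2}$, from which the finiteness of $\ex{\exp\set{x\xi_{H,\alpha,\gamma}^2}}$ for $x < c_{H,\alpha,\gamma}$ follows by the usual tail integral. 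The main obstacle is the local step: proving the Gaussian tail for $\xi_0$ with exactly the $\abs{\log(t_2-t_1)}^{1/2}$ normalization requires delicate control over both the pointwise $L^2$-norm and the modulus of continuity of $Z$ near the diagonal $t_2=t_1$.
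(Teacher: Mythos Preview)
Your proposal is correct, but it follows a different route from the paper's. Both arguments reduce to showing that the local supremum over a bounded time window (your $\xi_0$, the paper's $M_T$) is a.s.\ finite and hence has Gaussian tails by Fernique/Borell--TIS; the paper obtains this via Marcus's modulus-of-continuity theorem for $B^H$, while you propose chaining or GRR --- either works. The real difference is in the globalization step. You exploit \emph{stationarity of increments}: each $\xi_n$ has the same law as $\xi_0$, so a single Gaussian tail bound plus the summability of $\exp\bigl(-\kappa x^2(\log(n+2))^{2\gamma}\bigr)$ for $2\gamma>1$ finishes by Borel--Cantelli. The paper instead exploits \emph{self-similarity}: scaling $t\mapsto 2^k t$ sends $Z$ to $2^{k(H+\alpha-1)}Z$ and shifts $\abs{\log(t_2-t_1)}^{1/2}$ by $(k\log 2)^{1/2}$, yielding the distributional inequality $M_{2^k}\le \bigl((k\log 2)^{1/2}+1\bigr)M_1$; a moment argument then gives $M_{2^k}=o(k^\gamma)$ a.s. Your approach is arguably more elementary and would extend to any Gaussian process with stationary increments and the right local modulus, whereas the paper's scaling trick is specific to self-similar processes but delivers the growth rate $k^{1/2}$ of $M_{2^k}$ more transparently. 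For the exponential-moment statement, the paper simply invokes Fernique again on the global supremum; your explicit tail summation also works but is unnecessary once you observe that $\xi_{H,\alpha,\gamma}$ is itself a supremum of a centered Gaussian family.
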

\begin{proof}
Let $h(s) = s^{H+\alpha-1}\left(\abs{\log s}^{1/2}+1\right)$, $s>0$. Define for  $T>0$
$$
M_T = \sup_{0\le t_1<t_2\le t_1+1\le T} \frac{\abs{Z(t_1,t_2)}}{h(t_2-t_1)}.
$$
We will first prove that $M_T$ is finite almost surely. Since $\ex{\left(B_t^H-B_s^H\right)^2}= (t-s)^{2H}$, it follows from  \cite[Theorem 4]{marcus} that there exists a random variable $\xi_T$ such that almost surely for all $t_1,t_2$ with $0\le t_1<t_2\le t_1 + 1$
$$
\abs{B_{t_1}^H-B_{t_2}^H}\le \xi_T (t_2-t_1)^H \left(\abs{\log (t_2-t_1)}^{1/2}+1\right).
$$
Then
\begin{gather*}
\abs{Z(t_1,t_2)}\le \frac{\xi_T}{\Gamma(\alpha)} (t_2-t_1)^{H+\alpha-1} \left(\abs{\log (t_2-t_1)}^{1/2}+1\right) + I,
\end{gather*}
where
\begin{gather*}
I = \abs{\int_{t_1}^{t_2}\frac{B^H_{u}-B^H_{t_1}}{(u-t_1)^{2-\alpha}}du}\le \frac{\xi_T}{\Gamma(\alpha)}\int_{t_1}^{t_2}(u-t_1)^{H+\alpha-2}\left(\abs{\log (u-t_1)}^{1/2}+1\right) du\\
\le \frac{\xi_T}{\Gamma(\alpha)}(t_2-t_1)^{H+\alpha-1}\int_0^1 z^{H+\alpha-2} \left(\abs{\log z}^{1/2} + \abs{\log (t_2-t_1)}^{1/2}+1\right)dz\\
\le C\xi_T(t_2-t_1)^{H+\alpha-1} \left( \abs{\log (t_2-t_1)}^{1/2}+1\right),
\end{gather*}
whence finiteness of $M_T$ follows.
Since $M_T$ is a supremum of Gaussian family, Fernique's theorem implies that $\ex{e^{\eps M_T^2}}<\infty$ for some $\eps>0$, in particular, all moments of $M_T$ are finite.

Now observe that from $H$-selfsimilarity of $B^H$ it follows that
for any $a>0$
$$
\set{Z(at_1,at_2),0\le t_1<t_2}\overset{d}{=}\set{a^{H+\alpha-1}Z(t_1,t_2),0\le t_1<t_2}.
$$
Therefore, for any $k\ge 1$
\begin{gather*}
M_{1} \overset{d}{=} \sup_{0\le t_1<t_2\le 1} \frac{2^{-k(H+\alpha-1)}\abs{Z(2^k t_1,2^k t_2)}}{\abs{t_2-t_1}^{H+\alpha-1} \left(\abs{\log (t_2-t_1)}^{1/2}+1\right)}\\
= \sup_{0\le s_1<s_2\le 2^k} \frac{\abs{Z(s_1,s_2)}}{\abs{s_2-s_1}^{H+\alpha-1} \left(\abs{\log (s_2-s_1)-k\log 2}^{1/2}+1\right)}\\
\ge \sup_{0\le s_1<s_2\le s_1+1\le 2^k} \frac{\abs{Z(s_1,s_2)}}{\abs{s_2-s_1}^{H+\alpha-1} \left(\abs{\log (s_2-s_1)-k\log 2}^{1/2}+1\right)}\\
\ge \sup_{0\le s_1<s_2\le s_1+1\le 2^k} \frac{\abs{Z(s_1,s_2)}}{\abs{s_2-s_1}^{H+\alpha-1} \left(\abs{\log (s_2-s_1)}^{1/2}+(k\log 2)^{1/2}+1\right)}\\
\ge \frac{M_{2^k}}{(k\log 2)^{1/2}+1}.
\end{gather*}
Hence, for any $q\ge 1$
$$
\ex{M_{2^k}^q}\le \ex{M_1^q}\left((k\log 2)^{1/2}+1\right)^q.
$$
This implies that for any $p>q/2 +1$
$$
\ex{\sum_{k=1}^{\infty}\frac{M_{2^k}^q}{k^{p}}}= \sum_{k=1}^{\infty}\frac{\ex{M_{2^k}}^q}{k^{p}}\le
C\ex{M_1^q}\sum_{k=1}^{\infty}k^{q/2-p}
<\infty.
$$
In particular, the sum $\sum_{k=1}^{\infty}{\abs{M_{2^k}}^q}{k^{-p}}$ is finite almost surely, so ${M_{2^k}}=o({k^{p/q}})$, $k\to\infty$, a.s. If we choose some $q>(\gamma-1/2)^{-1}$, then $q/2+1<\gamma q$. Hence, we can take some $p\in(q/2+1,\gamma q)$ and arrive at ${M_{2^k}}=o({k^{\gamma}})$, $k\to\infty$, a.s. Thus, the random variable $\zeta = \sup_{k} M_{2^k}k^{-\gamma}$ is finite almost surely.

Obviously, for $t_2\le 2$
$$
\frac{\abs{Z(t_1,t_2)}}{h(t_2-t_1)\log(t_2+2)}\le \frac{M_2}{\log 2}.
$$
Now let $t_2\in(2^{k-1},2^{k}]$ for some $k\ge 2$. Then we have for any $t_1\in [t_2-1,t_2)$
\begin{gather*}
\abs{Z(t_1,t_2)}\le M_{2^k} h(t_2-t_1) \le \zeta k^\gamma h(t_2-t_1)
\le  \zeta \left(\frac{\log t_2}{\log 2}+1\right)^\gamma h(t_2-t_1)\\
 \le 2^\gamma  \zeta (\log t_2)^\gamma  h(t_2-t_1) < 2^\gamma \zeta  (\log (t_2+2))^\gamma  h(t_2-t_1).
\end{gather*}
Consequently, $\xi_{H,\alpha,\gamma} \le \max\set{M_2/\log 2, 2^\gamma \zeta}<\infty$ a.s.

The second statement follows from Fernique's theorem, since $\xi_{H,\alpha,\gamma}$ is a supremum of absolute values of a centered Gaussian family.
\end{proof}

\subsection{Estimates for solution of SDE driven by fractional Brownian motion}

Consider a stochastic differential equation
\begin{equation}\label{2.1}
X_t= X_0 + \int_0^t a(X_s)ds+ \int_0^t b(X_s)dB^H_s,
\end{equation}
where $X_0$ is non-random. In \cite{lyons},
it is shown that this equation has a unique solution under the following assumptions: there exist constants $\delta\in(1/H-1,1]$, $K>0$, $L>0$ and for every $N\ge 1$ there exists $R_N>0$ such that
\begin{enumerate}[(A)]
\item\label{A}
$\abs{a(x)}+\abs{b(x)}\le K$
\quad for all $x\in\R$,
\item\label{B}
$\abs{a(x)-a(y)}+\abs{b(x)-b(y)}\le L\abs{x-y}$
\quad for all $x,y\in\R$,
\item\label{C}
$\abs{b'(x)-b'(y)}\le R_N\abs{x-y}^\delta$
\quad for all $x\in[-N,N], y\in[-N,N]$.
\end{enumerate}

Fix some $\beta \in (1/2,H)$. Denote for $t_1<t_2$ $$\Lambda_\beta(t_1,t_2) = 1\vee \sup_{t_1\le u<v\le t_2} \frac{\abs{Z(u,v)}}{(v-u)^{\beta+\alpha-1}}.$$
Define for $a<b$
$$
\norm{f}_{a,b,\beta} = \sup_{a\le s<t\le b}\frac{\abs{f(t)-f(s)}}{\abs{t-s}^\beta}.
$$

\begin{theorem}\label{Xderivestim}
There exists a constant $M_{\alpha,\beta}$ depending on $\alpha$, $\beta$, $K$, and $L$ such that for any $t_1\ge 0,t_2\in (t_1,t_1+1]$
$$
\abs{X_{t_2} - X_{t_1}}\le M_{\alpha,\beta} \left(\Lambda_\beta(t_1,t_2) (t_2-t_1)^\beta + \Lambda_\beta(t_1,t_2)^{1/\beta}(t_2-t_1)\right).
$$
\end{theorem}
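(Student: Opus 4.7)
The plan is to apply the fractional integration by parts formula \eqref{integrfbm} to rewrite the stochastic integral in $X_{t_2}-X_{t_1}$ in terms of $Z(\cdot,t_2)$, bound everything by $\Lambda=\Lambda_\beta(t_1,t_2)$ together with the local H\"older seminorm $\norm{X}_{t_1,t_2,\beta}$, and then close the resulting self-referential inequality by a subdivision argument that yields precisely the two terms on the right-hand side. This is the Nualart--R\u{a}\c{s}canu style of pathwise estimate, but made quantitative in $\Lambda$.

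For any $t_1\le s<t\le t_2$, assumption \eqref{A} gives $\aabs{\int_s^t a(X_r)\,dr}\le K(t-s)$. For the noise part I would expand $D^\alpha_{s+}[b(X_\cdot)]$ in \eqref{integrfbm} and use the pointwise bound $\abs{Z(r,t)}\le\Lambda(t-r)^{\beta+\alpha-1}$ together with $\abs{b}\le K$ from \eqref{A} and $\abs{b(X_r)-b(X_u)}\le L\norm{X}_{t_1,t_2,\beta}(r-u)^\beta$ from \eqref{B}. The inner integrals that appear, namely $\int_s^r(r-u)^{\beta-1-\alpha}\,du$ and $\int_s^t(t-r)^{\beta+\alpha-1}(r-s)^{\beta-\alpha}\,dr$, are Beta integrals -- convergent exactly because $\beta>1/2>\alpha$ -- and evaluate to constant multiples of $(r-s)^{\beta-\alpha}$ and $(t-s)^{2\beta}$ respectively. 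Dividing by $(t-s)^\beta$ and taking the supremum over $s<t$ in $[t_1,t_2]$ yields a self-bound of the form
\[
\norm{X}_{t_1,t_2,\beta}\le K(t_2-t_1)^{1-\beta}+C_1\Lambda+C_2\Lambda(t_2-t_1)^\beta\norm{X}_{t_1,t_2,\beta},
\]
with $C_1,C_2$ depending only on $\alpha,\beta,K,L$.

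The final step is a localisation argument based on the critical length $\tau^\ast=(2C_2\Lambda)^{-1/\beta}$. If $t_2-t_1\le\tau^\ast$, the last term absorbs into the left-hand side and gives $\abs{X_t-X_s}\le M'\Lambda(t-s)^\beta$ on all of $[t_1,t_2]$ (the drift $K(t-s)$ is swallowed into this because $\Lambda\ge 1$ and $t-s\le 1$), matching the first summand of the target bound. Otherwise, I would partition $[t_1,t_2]$ into $N\le 2(t_2-t_1)/\tau^\ast$ equal subintervals of length $\tau\le\tau^\ast$, apply the short-interval estimate on each, and sum using $\sum_i\tau^\beta=N^{1-\beta}(t_2-t_1)^\beta$; the choice of $\tau^\ast$ then converts the factor $\Lambda\cdot\Lambda^{(1-\beta)/\beta}$ into exactly $\Lambda^{1/\beta}$ and leaves $(t_2-t_1)^{1-\beta}\cdot(t_2-t_1)^\beta=(t_2-t_1)$, which is the second summand. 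The main obstacle I anticipate is the book-keeping in this localisation step, so that the two regimes glue together into a single two-term bound with constants depending only on $\alpha,\beta,K,L$; the a priori finiteness of $\norm{X}_{t_1,t_2,\beta}$ needed to start the bootstrap is provided by the existence theory for \eqref{2.1} under \eqref{A}--\eqref{C}.
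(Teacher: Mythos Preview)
Your proposal is correct and follows essentially the same route as the paper: bound $\int_s^t b(X_u)\,dB^H_u$ via \eqref{integrfbm} and the definition of $\Lambda$, obtain the self-referential inequality $\norm{X}_{s,t,\beta}\le C\Lambda(1+\norm{X}_{s,t,\beta}(t-s)^\beta)$, absorb on subintervals of length at most $(2C\Lambda)^{-1/\beta}$, and then sum over a partition to get the $\Lambda^{1/\beta}(t_2-t_1)$ term. The only cosmetic differences are that the paper absorbs the drift term $K(t-s)^{1-\beta}\le K$ directly into $C\Lambda$ (using $\Lambda\ge1$) and partitions into pieces of the fixed critical length $\Delta$ plus a remainder rather than into $N$ equal pieces, but the arithmetic is the same.
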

\begin{proof}
The proof follows the lines of \cite[Theorem 2]{hu-nualart}.

Fix $t_1\ge 0$ and $t_2\in(t_1,t_1+1]$. Abbreviate $\Lambda = \Lambda_\beta(t_1,t_2)$.
Take any $s,t$ such that $t_1\le s<t\le t_2$.
Write
$$
\abs{X_t - X_s} \le \int_s^t\abs{a(X_u)}du + \abs{\int_s^t b(X_u)dB^H_u}\le K(t-s) + \abs{\int_s^t b(X_u)dB^H_u}.
$$
Estimate
\begin{equation}\label{intestim}
\begin{gathered}
\abs{\int_s^t b(X_u)dB^H_u}\le \int_s^t \abs{\left(D_{s+}^\alpha b(X)\right)(u)}
\abs{\left(D_{t-}^{1-\alpha} B^H_{t-}\right)(u)}du\\
\le \Lambda \int_s^t
\abs{\left(D_{s+}^{\alpha} b(X)\right)(u)}(t-u)^{\beta+\alpha-1}du.
\end{gathered}
\end{equation}
Now
\begin{gather*}
\abs{\left(D_{s+}^{\alpha} b(X)\right)(u)} \le
 \left(\frac{\abs{b(X_u)}}{(u-s)^\alpha} + \int_s^u \frac{\abs{b(X_u)-b(X_v)}}{(u-v)^{1+\alpha}}dv\right)\\
 \le K(u-s)^{-\alpha} + L \norm{X}_{s,t,\beta}\int_s^u (u-v)^{\beta-\alpha-1}dv\\ \le C_{\alpha,\beta}\left((u-s)^{-\alpha} + \norm{X}_{s,t,\beta}(u-s)^{\beta-\alpha}\right).
\end{gather*}
Hence,
\begin{gather*}
\abs{\int_s^t b(X_s)dB^H_u}\le C_{\alpha,\beta} \Lambda\left(
(t-s)^{\beta} + \norm{X}_{s,t,\beta}(t-s)^{2\beta}\right)
\end{gather*}
and
$$
\norm{X}_{s,t,\beta} \le K_{\alpha,\beta} \Lambda \left(
1 + \norm{X}_{s,t,\beta}(t-s)^{\beta}\right)
$$
with a constant $K_{\alpha,\beta}$ depending only on $\alpha$, $\beta$,  $K$, and $L$.
Setting $\Delta = (2 K_{\alpha,\beta} \Lambda)^{-1/\beta}$, we obtain
$\norm{X}_{s,t,\beta}\le 2K_{\alpha,\beta} \Lambda$ whenever $t-s<\Delta$.

Now, if $0<t_2-t_1\le \Delta$, then $$\abs{X_{t_2}-X_{t_1}}\le \norm{X}_{t_1,t_2,\beta} (t_2-t_1)^\beta\le 2K_{\alpha,\beta}\Lambda (t_2-t_1)^{\beta}.$$
On the other hand, if $t_2-t_1>\Delta$, then, partitioning the interval $[t_1,t_2]$ into $k = [(t_2-t_1)/\Delta]$ parts of length $\Delta$ and, possibly, an extra smaller part, we obtain
\begin{gather*}
\abs{X_{t_2}-X_{t_1}} \le \abs{X_{t_1+\Delta} -X_{t_1}} + 
\dots + \abs{X_{t_1+k\Delta} -X_{t_1+(k-1)\Delta}} + \abs{X_{t_2}-X_{t_1+k\Delta}}\\
\le \left(k+1\right)2K_{\alpha,\beta}\Lambda \Delta^\beta \le 4k K_{\alpha,\beta}\Delta^\beta \le 4K_{\alpha,\beta}\Lambda (t_2-t_1)\Delta^{\beta-1}\\
 = 2(2K_{\alpha,\beta} \Lambda)^{1/\beta} (t_2-t_1).
\end{gather*}
The proof is now complete.
\end{proof}
\begin{corollary}\label{xincrestimate}
For any $\gamma>1/2$, there exist random variables $\xi$ and $\zeta$ such that
for all $t_1\ge 0$, $t_2\in(t_1,t_1+1]$
$$
\abs{X_{t_2}-X_{t_1}}\le \zeta(t_2-t_1)^\beta\left(\log(t_2+2)\right)^\kappa,\quad
\Lambda_\beta (t_1,t_2)\le \xi (\log(t_2+2))^{\kappa\beta},
$$
where $\kappa = \gamma/\beta$.
Moreover, there exists some $c>0$ such that
$\ex{\exp\set{x\xi^{2}}}<\infty$ and $\ex{\exp\set{x\zeta^{2\beta}}}<\infty$ for $x<c$.
In particular, all moments of $\xi$ and $\zeta$ are finite.
\end{corollary}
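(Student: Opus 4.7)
The strategy is to combine Theorems \ref{fracderest} and \ref{Xderivestim}, using the first to convert the quantitative a.s. bound on $Z$ into a bound on $\Lambda_\beta$, and then feeding the latter into the deterministic estimate on $|X_{t_2}-X_{t_1}|$. All exponential-moment claims will be inherited from the Fernique-type statement already established for $\xi_{H,\alpha,\gamma}$.

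First I would bound $\Lambda_\beta$. For $t_1\le u<v\le t_2\le t_1+1$ we have $v-u\le 1$, so Theorem \ref{fracderest} yields
\[
\frac{\abs{Z(u,v)}}{(v-u)^{\beta+\alpha-1}}\le \xi_{H,\alpha,\gamma}\,(v-u)^{H-\beta}\bigl(\abs{\log(v-u)}^{1/2}+1\bigr)\bigl(\log(v+2)\bigr)^\gamma.
\]
Since $H>\beta$, the map $s\mapsto s^{H-\beta}(\abs{\log s}^{1/2}+1)$ is bounded on $(0,1]$ by some constant $C_0$, and $\log(v+2)\le \log(t_2+2)$. Thus
\[
\Lambda_\beta(t_1,t_2)\le 1\vee C_0\,\xi_{H,\alpha,\gamma}\bigl(\log(t_2+2)\bigr)^\gamma,
\]
and since $\log(t_2+2)\ge\log 2$ one can set $\xi:=(\log 2)^{-\gamma}\vee C_0\,\xi_{H,\alpha,\gamma}$ to obtain $\Lambda_\beta(t_1,t_2)\le \xi(\log(t_2+2))^{\kappa\beta}$, because $\kappa\beta=\gamma$.

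Next I would plug this into Theorem \ref{Xderivestim}. The first summand becomes $M_{\alpha,\beta}\,\xi\,(\log(t_2+2))^\gamma(t_2-t_1)^\beta$; for the second, using $(t_2-t_1)\le (t_2-t_1)^\beta$ (valid as $t_2-t_1\le 1$ and $\beta\le 1$), I get $M_{\alpha,\beta}\,\xi^{1/\beta}(\log(t_2+2))^\kappa(t_2-t_1)^\beta$. Since $\gamma\le\kappa$ and $\log(t_2+2)\ge\log 2>0$, the $(\log)^\gamma$-factor is dominated by a constant multiple of $(\log)^\kappa$, so the two summands combine into $\zeta(t_2-t_1)^\beta(\log(t_2+2))^\kappa$ with $\zeta:=C(\xi+\xi^{1/\beta})$.

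For the exponential moments, $\xi\le C(1+\xi_{H,\alpha,\gamma})$, so $\xi^2\le C'(1+\xi_{H,\alpha,\gamma}^2)$ and Theorem \ref{fracderest} gives the bound on $\ex{\exp(x\xi^2)}$. For $\zeta$, since $\beta<1$ the term $\xi^{1/\beta}$ dominates $\xi$ for large $\xi$, yielding $\zeta^{2\beta}\le C''(1+\xi^2)$; the exponential moment bound then transfers directly. The only mildly delicate point—and essentially the reason for the exponents $2$ and $2\beta$ in the statement—is this last matching: $\zeta\sim\xi^{1/\beta}$ forces precisely the power $2\beta$ for $\zeta$ to inherit the Gaussian-type tail from $\xi$. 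Everything else is bookkeeping: the real work was already done in Theorems \ref{fracderest} and \ref{Xderivestim}.
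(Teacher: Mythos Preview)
Your proposal is correct and follows essentially the same route as the paper: bound $\Lambda_\beta$ via Theorem~\ref{fracderest}, then feed it into Theorem~\ref{Xderivestim}, with $\xi$ an affine function of $\xi_{H,\alpha,\gamma}$ and $\zeta$ essentially $\xi^{1/\beta}$. The paper streamlines the second step slightly by observing that $\Lambda_\beta\ge 1$ and $t_2-t_1\le 1$ make the $\Lambda_\beta^{1/\beta}(t_2-t_1)$ term dominate the $\Lambda_\beta(t_2-t_1)^\beta$ term in Theorem~\ref{Xderivestim}, so one can take $\zeta = 2M_{\alpha,\beta}\,\xi^{1/\beta}$ directly rather than $C(\xi+\xi^{1/\beta})$; your version is equivalent since $\xi\ge(\log 2)^{-\gamma}>1$.
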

\begin{proof}
From Theorem~\ref{fracderest} we have for all
$u<v$
\begin{gather*}
Z(u,v)\le  \xi_{H,\alpha,\gamma} (v-u)^{H+\alpha-1}\left(\abs{\log(v-u)}^{1/2}+1\right) \left(\log(v+2)\right)^\gamma\\\le C_{H,\beta}\xi_{H,\alpha,\gamma} (v-u)^{\beta+\alpha-1} \left(\log(v+2)\right)^\gamma,
\end{gather*}
Dividing by $(v-u)^{\beta+\alpha-1}$ and taking supremum over $u,v$ such that $t_1\le u<v\le t_2$, we get
\begin{gather*}
\Lambda_\beta(t_1,t_2)\le  1\vee \left(C_{H,\beta}\xi_{H,\alpha,\gamma} \left(\log(t_2+2)\right)^\gamma\right)\le \left(1\vee C_{H,\beta}\xi_{H,\alpha,\gamma}\right) \left(\log(t_2+2)\right)^\gamma.
\end{gather*}
Further, since $\Lambda_\beta(t_1,t_2)\ge 1$ and $t_2-t_1\le 1$, it follows from Theorem~\ref{Xderivestim} that
$$
\abs{X_{t_2}-X_{t_1}}\le 2M_{\alpha,\beta} \Lambda_\beta(t_1,t_2)^{1/\beta}(t_2-t_1)^{\beta}.
$$
 Hence, the desired statement holds with $\xi = 1\vee C_{H,\beta}\xi_{H,\alpha,\gamma}$ and $\zeta = 2M_{\alpha,\beta}\xi^{1/\beta}$.
\end{proof}

The following lemma gives a particular case of Corollary~\ref{xincrestimate}, suitable for our needs.
Let $\gamma>1/2$ and $\kappa=\gamma/\beta$ be fixed, $\xi$ and $\zeta$ will be the corresponding random variables from Corollary~\ref{xincrestimate}.
\begin{lemma}
For any $n\ge 2$ and any $t_1,t_2\in[0,2^{n}]$ such that $t_1<t_2\le t_1+1$
$$
\abs{X_{t_2}-X_{t_1}}\le \zeta n^\kappa (t_2-t_1)^\beta,
\quad \Lambda_\beta(t_1,t_2) \le \xi n^\gamma.
$$
\end{lemma}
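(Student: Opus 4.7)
The plan is to reduce this immediately to Corollary~\ref{xincrestimate}, which already gives essentially the same bounds but with $\log(t_2+2)$ in place of $n$. The only real content is to convert the logarithmic factor into a power of $n$ using the hypothesis $t_2\le 2^n$.

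First I would apply Corollary~\ref{xincrestimate} to the same pair $(t_1,t_2)$ and the same $\gamma$ (and hence the same $\kappa=\gamma/\beta$). This yields, with the random variables $\xi,\zeta$ from that corollary,
\begin{gather*}
\abs{X_{t_2}-X_{t_1}}\le \zeta(t_2-t_1)^\beta \left(\log(t_2+2)\right)^\kappa,\\
\Lambda_\beta(t_1,t_2)\le \xi\left(\log(t_2+2)\right)^{\kappa\beta} = \xi\left(\log(t_2+2)\right)^\gamma.
\end{gather*}
Since $t_2\in[0,2^n]$, we have $t_2+2\le 2^n+2\le 2^{n+1}$, so $\log(t_2+2)\le (n+1)\log 2\le \frac{3}{2}n\log 2$ for $n\ge 2$.

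Raising this bound to the power $\kappa$ in the first inequality and to the power $\gamma$ in the second produces constants $(3\log 2/2)^\kappa$ and $(3\log 2/2)^\gamma$ respectively, multiplying $\zeta(t_2-t_1)^\beta n^\kappa$ and $\xi n^\gamma$. These universal constants can be absorbed into $\zeta$ and $\xi$ (formally by replacing the random variables of Corollary~\ref{xincrestimate} by suitable constant multiples, which does not affect the exponential integrability asserted in the corollary and hence keeps the promise that all moments are finite).

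There is no real obstacle here: the substantive work was carried out in Theorem~\ref{fracderest}, Theorem~\ref{Xderivestim}, and Corollary~\ref{xincrestimate}. The only point to watch is the mild estimate $\log(2^n+2)\le (n+1)\log 2$ for $n\ge 1$, together with $n+1\le \frac{3}{2}n$ for $n\ge 2$, which is where the assumption $n\ge 2$ enters.
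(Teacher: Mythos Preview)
Your proof is correct and follows the same approach as the paper: apply Corollary~\ref{xincrestimate} and then bound $\log(t_2+2)$ by a constant multiple of $n$ using $t_2\le 2^n$. The paper's version simply writes $(n+1)\log 2\le n$ (which in fact fails at $n=2$), whereas your use of $(n+1)\le \tfrac{3}{2}n$ and absorption of the resulting constant into $\xi,\zeta$ is the cleaner way to make the inequality honest.
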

\begin{proof}
In this case
$$
\log(t_2+2)\le \log(2^{n}+2)\le\log 2^{n+1}= (n+1)\log 2\le n,
$$
whence the statement follows.
\end{proof}

\section{Drift parameter estimation}\label{sec:estim}
Now we turn to problem of drift parameter estimation in equations of
type \eqref{2.1}.
Let $(\Omega,\mathcal F)$ be a measurable space and $X:\Omega \to C[0,\infty)$ be a stochastic process. Consider a family of probability measures $\set{\Prob^\theta,\theta\in\R}$ on $(\Omega,\mathcal F)$ such that for each $\theta\in\R$, $\mathcal F$ is $\Prob^\theta$-complete, and there is an fBm $\set{B^{H,\theta}_t,t\ge 0}$
on $(\Omega,\mathcal F,\Prob^\theta)$ such that $X$ solves a parametrized version of \eqref{2.1}
\begin{equation}\label{theta-sde}
X_t= X_0 + \theta \int_0^t a(X_s)ds+ \int_0^t b(X_s)dB^{H,\theta}_s.
\end{equation}
Our main problem is to construct an estimator for $\theta$ based on discrete observations of $X$.
Specifically, we will assume that for some $n\ge 1$ we observe the values
$X_{t_k^n}$ at the
following uniform  partition of $[0,2^n]$: $t_k^n=k 2^{-n}$, $k=0,1,\dots,2^{2n}$.

To simplify the notation, in the following we will fix an arbitrary $\theta\in \R$ and denote simply $B^{H,\theta}=B^H$, $\Prob^\theta = \Prob$. We also fix the parameters $\alpha\in (1-H,1/2)$, $\beta\in (1-\alpha,H)$, $\gamma >1/2$, and $\kappa = \gamma/\beta$. Finally, with a slight abuse of notation, let  $\xi$ and $\zeta$ be the random variables from Corollary~\ref{xincrestimate} applied to equation \eqref{theta-sde}.

In order to construct a consistent estimator, we need a lemma concerning the discrete approximation of
integrals in \eqref{theta-sde}.

\begin{lemma}\label{2.cor2}
For all $n\ge 1$ and $k=1,2,\dots,2^{2n}$
$$
\abs{\int_{t^n_{k-1}}^{t^n_k}
\left(a(X_u)-a(X_{t_{k-1}^n})\right)du}
\le C\zeta n^{\kappa}2^{-n(\beta+1)}
$$
and
$$
\abs{\int_{t^n_{k-1}}^{t^n_k}
\left(b(X_u)-b(X_{t_{k-1}^n})\right)dB^{H,\theta}_u}
\le C\xi\zeta n^{\gamma+\kappa} 2^{-2n\beta}.
$$
\end{lemma}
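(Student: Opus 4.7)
The plan is to treat the two integrals separately, using as main inputs the Lipschitz hypothesis (B), the increment estimate $\abs{X_u-X_{t^n_{k-1}}}\le \zeta n^\kappa (u-t^n_{k-1})^\beta$ from the preceding lemma, and—for the second integral—the representation \eqref{integr} of the pathwise integral together with the bound $\Lambda_\beta(t^n_{k-1},t^n_k)\le \xi n^\gamma$. For the first integral, (B) gives $\abs{a(X_u)-a(X_{t^n_{k-1}})}\le L\abs{X_u-X_{t^n_{k-1}}}\le L\zeta n^\kappa (u-t^n_{k-1})^\beta$, and integrating over an interval of length $2^{-n}$ produces $\int_0^{2^{-n}} s^\beta\,ds = (\beta+1)^{-1}2^{-n(\beta+1)}$, which is exactly the claimed bound.

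For the second integral, setting $f(u) = b(X_u) - b(X_{t^n_{k-1}})$, I would invoke the generalized Lebesgue--Stieltjes formula \eqref{integr} to write
$$
\int_{t^n_{k-1}}^{t^n_k} f(u)\,dB^H_u = e^{i\pi\alpha}\int_{t^n_{k-1}}^{t^n_k} \bigl(D^{\alpha}_{t^n_{k-1}+}f\bigr)(u)\,Z(u,t^n_k)\,du,
$$
and then estimate the two factors independently. Hypothesis (B) together with the increment bound yields $\abs{f(u)}\le L\zeta n^\kappa (u-t^n_{k-1})^\beta$ and $\abs{f(u)-f(v)}\le L\zeta n^\kappa (u-v)^\beta$; inserting these into the defining formula for $D^\alpha_{t^n_{k-1}+}f$ gives
$$
\bigl|D^{\alpha}_{t^n_{k-1}+}f(u)\bigr|\le C_{\alpha,\beta}\,\zeta\,n^\kappa (u-t^n_{k-1})^{\beta-\alpha},
$$
where convergence of the inner singular integral uses $\beta-\alpha>0$. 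For the second factor, the preceding lemma gives $\abs{Z(u,t^n_k)}\le \xi n^\gamma (t^n_k-u)^{\beta+\alpha-1}$.

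Multiplying these estimates and integrating produces a Beta-type integral
$$
\int_{t^n_{k-1}}^{t^n_k}(u-t^n_{k-1})^{\beta-\alpha}(t^n_k-u)^{\beta+\alpha-1}\,du
= \Beta(\beta-\alpha+1,\beta+\alpha)\,2^{-2n\beta},
$$
which converges thanks to $\beta+\alpha>1$, i.e.\ precisely the parameter condition $\beta>1-\alpha$ built into the section. Putting the three factors $\zeta n^\kappa$, $\xi n^\gamma$, and $2^{-2n\beta}$ together yields the desired bound $C\xi\zeta n^{\gamma+\kappa}2^{-2n\beta}$. The main obstacle is the fractional derivative estimate: one has to check that the boundary term $f(u)/(u-t^n_{k-1})^\alpha$ and the singular integral $\int_{t^n_{k-1}}^u (f(u)-f(v))(u-v)^{-1-\alpha}\,dv$ combine to produce the clean scaling $(u-t^n_{k-1})^{\beta-\alpha}$, and that the surviving Beta integral is integrable at both endpoints; both reduce to verifying $\beta-\alpha>0$ and $\beta+\alpha>1$, which are guaranteed by the standing choice $\alpha\in(1-H,1/2)$, $\beta\in(1-\alpha,H)$.
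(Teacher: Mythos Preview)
Your proposal is correct and follows essentially the same route as the paper: Lipschitz bound plus the increment estimate for the first integral, and the representation \eqref{integr} with the fractional-derivative estimate $\bigl|D^{\alpha}_{t^n_{k-1}+}f(u)\bigr|\le C\zeta n^\kappa (u-t^n_{k-1})^{\beta-\alpha}$ combined with $\abs{Z(u,t^n_k)}\le \xi n^\gamma (t^n_k-u)^{\beta+\alpha-1}$ for the second. The only cosmetic difference is that the paper bounds the resulting integrand crudely by $(u-t^n_{k-1})^{2\beta-1}$, whereas you identify it as the Beta integral $\Beta(\beta-\alpha+1,\beta+\alpha)\,2^{-2n\beta}$; both give the same $2^{-2n\beta}$ scaling.
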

\begin{proof}
Write
\begin{gather*}
\abs{\int_{t^n_{k-1}}^{t^n_k}
\left(a(X_u)-a(X_{t_{k-1}^n})\right)du}
\le \int_{t^n_{k-1}}^{t^n_k}
\abs{a(X_u)-a(X_{t_{k-1}^n})}du\\
\le
K\zeta n^{\kappa} \int_{t^n_{k-1}}^{t^n_k} (u-t_{k-1}^n)^\beta du
\le C \zeta n^{\kappa} (t_k^n-t_{k-1}^n)^{\beta+1} = C\zeta n^{\kappa}2^{-n(\beta+1)}.
\end{gather*}
Similarly to \eqref{intestim},
\begin{gather*}
\abs{\int_{t^n_{k-1}}^{t^n_k}
\left(b(X_u)-b(X_{t_{k-1}^n})\right)dB^H_u}
\\\le \Lambda_\beta(t_{k-1}^n,t_k^n) \int_{t^n_{k-1}}^{t^n_k}
\abs{D_{t^n_{k-1}+}^\alpha (b(X)-b(X_{t_{k-1}^n}))(u)}(t^n_{k}-u)^{\beta+\alpha-1}du\\
\le \xi n^\gamma \int_{t^n_{k-1}}^{t^n_k}
\abs{D_{t^n_{k-1}+}^\alpha (b(X)-b(X_{t_{k-1}^n}))(u)}(t^n_{k}-u)^{\beta+\alpha-1}du,
\end{gather*}
and
\begin{gather*}
\abs{D_{t^n_{k-1}+}^\alpha (b(X)-b(X_{t_{k-1}^n}))(u)} \le
\frac{\aabs{b(X_{u})-b(X_{t_{k-1}^n})}}{(u-t_{k-1}^n)^\alpha} + \int_{t^n_{k-1}}^{u} \frac{\abs{b(X_{u})-b(X_{v})}}{(u-v)^{1+\alpha}}dv\\
\le K \zeta n^\kappa \left(u-t^n_{k-1}\right)^{\beta-\alpha} +
K\zeta n^\kappa \int_{t^n_{k-1}}^{u} (u-v)^{\beta-\alpha-1}dv \le C\zeta n^\kappa\left(u-t^n_{k-1}\right)^{\beta-\alpha}.
\end{gather*}
Then we can write the estimate
\begin{gather*}
\abs{\int_{t^n_{k-1}}^{t^n_k}
\left(b(X_u)-b(X_{t_{k-1}^n})\right)dB^H_u}
\le C\xi\zeta n^{\gamma+\kappa} \int_{t^n_{k-1}}^{t^n_k}
(u-t^n_{k-1})^{2\beta-1}du\\
\le C\xi\zeta n^{\gamma+\kappa} \left(t^n_k-t^n_{k-1}\right)^{2\beta}=C\xi\zeta n^{\gamma+\kappa} 2^{-2n\beta},
\end{gather*}
which finishes the proof.
\end{proof}

Now we are ready to construct consistent estimators for $\theta$. In order to proceed, we need a technical assumption, in addition to conditions~\eqref{A}--\eqref{C}:
\begin{enumerate}[(A)]
\setcounter{enumi}{3}
\item there exist a constant $M>0$ such that for all $x\in\R$
$$
\abs{a(x)}\ge M,
\qquad
\abs{b(x)}\ge M.
$$
\end{enumerate}

Consider now the following estimator:
$$
\hat\theta_n^{(1)}=\frac{\sum_{k=1}^{2^{2n}-1}\left(t_k^n\right)^{\lambda}
\left(2^n-t_k^n\right)^{\lambda}
b^{-1}\left(X_{t_{k-1}^n}\right)
\left(X_{t_{k}^n}-X_{t_{k-1}^n}\right)}{\sum_{k=1}^{2^{2n}-1}\left(t_{k}^n\right)^{\lambda}
\left(2^n-t_{k}^n\right)^{\lambda}
b^{-1}\left(X_{t_{k-1}^n}\right)
a\left(X_{t_{k-1}^n}\right)\frac{1}{2^n}},
$$
where $\lambda = 1/2-H$.
\begin{theorem}\label{theta1cons}
With probability one,
$\hat\theta_n^{(1)}\to\theta$, $n\to\infty$. Moreover, there exists a random variable $\eta$ with all finite moments such that
$\abs{\hat\theta_n^{(1)}-\theta}\le \eta n^{\kappa+\gamma} 2^{-\rho n}$, where $\rho = (1-H)\wedge (2\beta-1)$.
\end{theorem}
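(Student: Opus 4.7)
The plan is to use the SDE~\eqref{theta-sde} to decompose each increment as $X_{t_k^n}-X_{t_{k-1}^n}=\theta\int_{t_{k-1}^n}^{t_k^n}a(X_s)\,ds+\int_{t_{k-1}^n}^{t_k^n}b(X_s)\,dB^H_s$, substitute this into the numerator of $\hat\theta_n^{(1)}$, and replace each integral by its left-endpoint approximation. By Lemma~\ref{2.cor2} the per-subinterval errors are $O(\zeta n^\kappa 2^{-n(\beta+1)})$ and $O(\xi\zeta n^{\gamma+\kappa}2^{-2n\beta})$. The drift main term reproduces exactly $\theta D_n$, where $D_n$ is the denominator of $\hat\theta_n^{(1)}$, while the diffusion main term collapses, thanks to the cancellation $b^{-1}(X_{t_{k-1}^n})\,b(X_{t_{k-1}^n})=1$, to the purely Gaussian weighted sum
$$S_n=\sum_{k=1}^{2^{2n}-1}(t_k^n)^\lambda(2^n-t_k^n)^\lambda\bigl(B^H_{t_k^n}-B^H_{t_{k-1}^n}\bigr).$$
One then writes $\hat\theta_n^{(1)}-\theta=(S_n+R_n)/D_n$, with $R_n$ collecting all the Lemma~\ref{2.cor2} remainders, and estimates $D_n$ from below and $S_n,R_n$ from above.

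For $D_n$: condition~(D) together with continuity forces $a$ and $b$ each to have constant sign, so $a/b$ has constant sign with $|a/b|\ge M/K$, and hence $|D_n|\ge (M/K)\cdot 2^{-n}\sum_k(t_k^n)^\lambda(2^n-t_k^n)^\lambda$; this is a Riemann sum for $\int_0^{2^n}s^\lambda(2^n-s)^\lambda\,ds=2^{n(2-2H)}B(\tfrac32-H,\tfrac32-H)$ (recall $\lambda=\tfrac12-H\in(-\tfrac12,0)$, so the integrand is integrable), giving $|D_n|\ge c\,2^{n(2-2H)}$. For $S_n$: the isometry $\operatorname{Var}(\int f\,dB^H)=H(2H-1)\iint f(s)f(t)|s-t|^{2H-2}\,ds\,dt$ applied to $S_n$ with the rescaling $s=2^nu$, $t=2^nv$ gives $\operatorname{Var}(S_n)\le C\,2^{n(2-2H)}$; Gaussian tail bounds and Borel--Cantelli then yield a random variable $\eta_1$ with Gaussian tails such that $|S_n|\le \eta_1\sqrt{n}\,2^{n(1-H)}$ for all $n$. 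For $R_n$: using $|b^{-1}|\le M^{-1}$ from~(D) and the analogous Riemann estimate $\sum_k(t_k^n)^\lambda(2^n-t_k^n)^\lambda\le C\,2^{n(3-2H)}$, Lemma~\ref{2.cor2} delivers
$$|R_n|\le C|\theta|\zeta n^\kappa 2^{n(2-2H-\beta)}+C\xi\zeta n^{\gamma+\kappa}2^{n(3-2H-2\beta)}.$$

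Dividing the three bounds by $|D_n|$, the drift contribution $O(n^\kappa 2^{-n\beta})$ is dominated by the diffusion contribution $O(n^{\gamma+\kappa}2^{-n(2\beta-1)})$ (since $\beta<1$ and $\beta>\tfrac12>1-H$), and combining with $|S_n|/|D_n|=O(\sqrt n\,2^{-n(1-H)})$ yields
$$|\hat\theta_n^{(1)}-\theta|\le \eta\,n^{\gamma+\kappa}\bigl(2^{-n(1-H)}+2^{-n(2\beta-1)}\bigr)\le \eta\,n^{\gamma+\kappa}2^{-\rho n},$$
where $\eta$ is assembled from $\xi$, $\zeta$ and $\eta_1$; almost sure convergence is immediate since $\rho>0$, and all moments of $\eta$ are inherited from Corollary~\ref{xincrestimate} together with the Gaussian concentration for $\eta_1$. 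The main obstacle I foresee is twofold: first, tracking the integrable endpoint singularities of the weights $(t_k^n)^\lambda(2^n-t_k^n)^\lambda$ precisely enough to obtain matching exponents ($2-2H$ in the lower bound for $|D_n|$ and $3-2H$ in the upper bound for $\sum_k w_k$); and second, computing $\operatorname{Var}(S_n)$ sharply enough that its $2^{n(2-2H)}$ scaling cancels the growth of $D_n$, which is what produces the clean exponent $1-H$ and makes $\rho=1-H$ attainable as $\beta\uparrow H$.
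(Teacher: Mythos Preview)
Your proposal is correct and follows essentially the same route as the paper: decompose the increments via the SDE, use Lemma~\ref{2.cor2} for the two remainder terms, bound the denominator below via a Beta-type Riemann sum (the paper normalizes so that the sum converges to $\Beta(1+\lambda,1+\lambda)$, which is exactly your $2^{n(2-2H)}$ growth), and control the Gaussian piece $S_n$ by a second-moment estimate followed by Fernique/Borel--Cantelli. The only cosmetic differences are that the paper bounds $\ex{S_n^2}$ via the $L_{1/H}$ inequality $\ex{(\int f\,dB^H)^2}\le C(\int|f|^{1/H})^{2H}$ rather than the covariance kernel, and it justifies the Riemann-sum lower bound explicitly by monotonicity of $x^\lambda(1-x)^\lambda$ on each half of $[0,1]$---a point you should spell out since $\lambda<0$ makes the integrand singular at the endpoints.
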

\begin{proof}
It follows from~\eqref{theta-sde} that
\begin{equation*} 
\begin{split}
X_{t_{k}^n}-X_{t_{k-1}^n}
&=\theta\int_{t_{k-1}^n}^{t_{k}^n}a(X_v)dv+\int_{t_{k-1}^n}^{t_{k}^n}b(X_v)dB^H_v\\
&=\theta\int_{t_{k-1}^n}^{t_{k}^n}a\left(X_{t_{k-1}^n}\right)dv
+\theta\int_{t_{k-1}^n}^{t_{k}^n}\left(a(X_v)-a\left(X_{t_{k-1}^n}\right)\right)dv\\
&\quad+\int_{t_{k-1}^n}^{t_{k}^n}b\left(X_{t_{k-1}^n}\right)dB^H_v
+\int_{t_{k-1}^n}^{t_{k}^n}\left(b(X_v)-b\left(X_{t_{k-1}^n}\right)\right)dB^H_v.
\end{split}
\end{equation*}
Then
\begin{equation*}
\hat\theta_n^{(1)}=\theta+\frac{B_n+E_n+D_n}{A_n},
\end{equation*}
where
\begin{align*}
A_n&=2^{n(2H-3)}\sum_{k=1}^{2^{2n}-1}\left(t_{k}^n\right)^{\lambda}
\left(2^n-t_{k}^n\right)^{\lambda}a\left(X_{t_{k-1}^n}\right)
b^{-1}\left(X_{t_{k-1}^n}\right),\\
B_n&= 2^{2n(H-1)} \theta \sum_{k=1}^{2^{2n}-1}\left(t_{k}^n\right)^{\lambda}
\left(2^n-t_{k}^n\right)^{\lambda}b^{-1}\left(X_{t_{k-1}^n}\right)
\int_{t_{k-1}^n}^{t_{k}^n}\left(a(X_v)-a\left(X_{t_{k-1}^n}\right)\right)dv,\\
E_n&={2^{2n(H-1)}}\sum_{k=1}^{2^{2n}-1}\left(t_{k}^n\right)^{\lambda}\left(2^n-t_{k}^n\right)^{\lambda}
\left(B^H_{t_{k}^n}-B^H_{t_{k-1}^n}\right),\\
D_n&={2^{2n(H-1)}}\sum_{k=1}^{2^{2n}-1}\left(t_{k}^n\right)^{\lambda}
\left(2^n-t_{k}^n\right)^{\lambda}b^{-1}\left(X_{t_{k-1}^n}\right)
\int_{t_{k-1}^n}^{t_{k}^n}\left(b(X_v)-b\left(X_{t_{k-1}^n}\right)\right)dB^H_v.
\end{align*}


It is not hard to show that the sequence
\begin{gather*}
\gamma_n=2^{n(2H-3)}\sum_{k=1}^{2^{2n}-1}\left(t_k^n\right)^{\lambda}
\left(2^n-t_k^n\right)^{\lambda} = \sum_{k=1}^{2^{2n}-1}\left(\frac{k}{2^{2n}}\right)^{\lambda}
\left(1-\frac{k}{2^{2n}}\right)^{\lambda}\frac{1}{2^{2n}}
\end{gather*}
converges to $\int_0^1x^{\lambda}(1-x)^{\lambda}dx=\Beta(1+\lambda,1+\lambda)$, hence, is bounded and uniformly positive.

Indeed,
$h(x)=x^{\lambda}(1-x)^{\lambda}$
increases for $x\in\left(0,\frac12\right]$,
then
$$
\int_0^{\frac12}h(x)dx
=\sum_{k=0}^{2^{2n-1}-1}\int_{\frac{k}{2^{2n}}}^{\frac{k+1}{2^{2n}}}h(x)dx
<\int_0^{\frac1{2^{2n}}}h(x)dx
    +\sum_{k=1}^{2^{2n-1}}h\left(\frac{k}{2^{2n}}\right)\frac1{2^{2n}}.
$$
On the other hand,
$$
\int_0^{\frac12}h(x)dx
=\sum_{k=1}^{2^{2n-1}}\int_{\frac{k-1}{2^{2n}}}^{\frac{k}{2^{2n}}}h(x)dx
>\sum_{k=1}^{2^{2n-1}}h\left(\frac{k}{2^{2n}}\right)\frac1{2^{2n}}.
$$
So
$$
0<\int_0^{\frac12}h(x)dx-\sum_{k=1}^{2^{2n-1}}h\left(\frac{k}{2^{2n}}\right)\frac1{2^{2n}}
<\int_0^{\frac1{2^{2n}}}h(x)dx
\rightarrow0,\ n\to\infty.
$$
Hence,
$$\sum\limits_{k=1}^{2^{2n-1}}h\left(\frac{k}{2^{2n}}\right)\frac1{2^{2n}}\to\int_0^{\frac12}h(x)dx,
n\to\infty.$$
Similarly one can prove that
$$\sum\limits_{k=2^{2n-1}+1}^{2^{2n}-1}h\left(\frac{k}{2^{2n}}\right)\frac1{2^{2n}}\to\int_{\frac12}^1h(x)dx,\
n\to\infty.$$

By assumption (D),
$a(x)b^{-1}(x)$
is bounded away from zero and keeps its sign.
Therefore,
$$
\liminf_{n\to\infty}\abs{A_n}
\ge MK^{-1}\lim_{n\to\infty}\gamma_n
=MK^{-1}\Beta(1+\lambda,1+\lambda)>0.
$$
So it is sufficient to estimate
$B_n$, $E_n$, and $D_n$.

By Lemma~\ref{2.cor2},
\begin{align*}
\abs{B_n}&\le  C \abs{\theta}\zeta n^{\kappa} M^{-1} 2^{n(2H-\beta-3)} \sum_{k=1}^{2^{2n}-1}\left(t_{k}^n\right)^{\lambda}
\left(2^n-t_{k}^n\right)^{\lambda}
\le C_{\theta} \zeta n^{\kappa} 2^{-n\beta};\\
\abs{D_n}&\le C\xi\zeta n^{\gamma+\kappa} {M^{-1}}{2^{n(2H-2-2\beta)}}\sum_{k=1}^{2^{2n}-1}\left(t_{k}^n\right)^{\lambda}
\left(2^n-t_{k}^n\right)^{\lambda}
\le C \xi\zeta n^{\gamma+\kappa} 2^{n(1-2\beta)}.
\end{align*}
Finally we estimate $E_n$. Start by writing
$$
\ex{E_n^2}
=2^{4n(H-1)}\ex{\left(\sum_{k=1}^{2^{2n}-1}\int_{t_{k-1}^n}^{t_{k}^n}
\left(t_{k}^n\right)^{\lambda}\left(2^n-t_{k}^n\right)^{\lambda}
dB^H_s\right)^2}.
$$
According to~\cite[Corollary 1.9.4]{Mishura08}, for $f\in L_{1/H}[0,t]$ there exist a constant $C_H>0$ such that
$$
\ex{\left(\int_0^tf(s)dB^H_s\right)^2}\le C_H\left(\int_0^t\abs{f(s)}^{1/H}ds\right)^{2H}.
$$
Hence,
\begin{gather*}
\ex{E_n^2}
\le C2^{4n(H-1)}\left(\sum_{k=1}^{2^{2n}-1}\int_{t_{k-1}^n}^{t_{k}^n}
\left(t_{k}^n\right)^{\lambda/H}\left(2^n-t_{k}^n\right)^{\lambda/H}
ds\right)^{2H}\\=C 2^{2n(H-1)}\left(\sum_{k=1}^{2^{2n}-1}
\left(\frac{k}{2^{2n}}\right)^{\lambda/H}\left(1-\frac{k}{2^{2n}}\right)^{\lambda/H}
\frac{1}{2^{2n}}\right)^{2H}.
\end{gather*}
As above,
$$
\sum_{k=1}^{2^{2n}-1}
\left(\frac{k}{2^{2n}}\right)^{\lambda/H}\left(1-\frac{k}{2^{2n}}\right)^{\lambda/H}
\frac{1}{2^{2n}}
\rightarrow
\Beta\left(1+\lambda/H,1+\lambda/H\right),\ n\to\infty,
$$
which implies that
$\ex {E_n^2}\le C 2^{2n(H-1)}$. Since $E_n$ is Gaussian,
$\ex {\abs{E_n}^p}\le C_p 2^{pn(H-1)}$ for any $p\ge 1$.
Therefore, for any $\nu>1$
$$
\ex{\sum_{n=1}^\infty \frac{\abs{E_n}^p}{n^\nu 2^{pn(H-1)}}} =
\sum_{n=1}^\infty \frac{\ex{\abs{E_n}^p}}{n^\nu 2^{pn(H-1)}}\le
C_p \sum_{n=1}^{\infty} n^{-\nu}<\infty.
$$
Consequently, $$
\xi': = \sup_{n\ge 1}\frac{\abs{E_n}}{n^{\nu/p} 2^{n(H-1)}}<\infty$$
almost surely, moreover, by Fernique's theorem, all moments of $\xi'$ are finite.

Let us summarize the estimates:
\begin{gather*}
\abs{B_n}\le C_{\theta} \zeta n^{\kappa} 2^{-n\beta},\
\abs{D_n} \le C \xi\zeta n^{\gamma+\kappa} 2^{n(1-2\beta)},\
\abs{E_n}\le \xi' n^{\delta} 2^{n(H-1)},
\end{gather*}
where $\delta>0$ can be taken arbitrarily small. We have $-\beta<-1/2<H-1$, $-\beta<1-2\beta$, so $\abs{B_n}$ is of the smallest order. Which of the remaining two estimates wins, depends on values of $\beta$ and $H$: for $H$ close to $1/2$, $1-2\beta$ is close to $0$, while $H-1$ is close to $-1/2$; for $\beta$ close to $1$, $1-2\beta$ is close to $-1$, while $H-1$ is close to $0$.
Thus, we arrive to
\begin{gather*}
\abs{B_n}+\abs{E_n}+\abs{D_n}\le \eta n^{\gamma+\kappa}  2^{-\rho n},
\end{gather*}
where $\eta\le C_\theta (\zeta+\xi\zeta+\xi')$, so all its moments are finite. The proof is now complete.
\end{proof}

Consider a simpler estimator:
$$
\hat\theta_n^{(2)}=\frac{\sum_{k=1}^{2^{2n}-1}
b^{-1}\left(X_{t_{k-1}^n}\right)
\left(X_{t_{k}^n}-X_{t_{k-1}^n}\right)}%
{\frac{1}{2^n}\sum_{k=1}^{2^{2n}-1}
b^{-1}\left(X_{t_{k-1}^n}\right)
a\left(X_{t_{k-1}^n}\right)}.
$$
This is a discretized maximum likelihood estimator for $\theta$ in equation \eqref{2.1}, where $B^H$ is replaced by Wiener process. Nevertheless, this estimator is consistent as well. Namely, we have the following result, whose proof is similar to that of Theorem~\ref{theta1cons}, but is much simpler, so we omit it.
\begin{theorem}\label{theta2cons}
With probability one,
$\hat\theta_n^{(2)}\to\theta$, $n\to\infty$.  Moreover, there exists a random variable $\eta'$ with all finite moments such that
$\abs{\theta_n^{(2)}-\theta}\le \eta' n^{\kappa+\gamma}2^{-\rho n}$.
\end{theorem}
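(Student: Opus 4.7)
The plan is to follow the proof of Theorem~\ref{theta1cons} verbatim but with the constant weights $w_k^n \equiv 1$ in place of $(t_k^n)^\lambda(2^n-t_k^n)^\lambda$. Substituting \eqref{theta-sde} into the numerator of $\hat\theta_n^{(2)}$ produces the decomposition
$$
\hat\theta_n^{(2)} - \theta = \frac{B_n + E_n + D_n}{A_n},
$$
where $A_n = 2^{-n}\sum_{k=1}^{2^{2n}-1} b^{-1}(X_{t_{k-1}^n}) a(X_{t_{k-1}^n})$, the terms $B_n$ and $D_n$ are the direct analogues of those in the proof of Theorem~\ref{theta1cons} (with weights set to $1$), and $E_n = \sum_{k=1}^{2^{2n}-1}(B^H_{t_k^n} - B^H_{t_{k-1}^n})$.

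The denominator is much easier to control here than in Theorem~\ref{theta1cons}: by \eqref{A} and (D), each summand $b^{-1}(x)a(x)$ lies in $[M/K, K/M]$ in absolute value and keeps a constant sign, so $|A_n| \ge (M/K)(2^n - 2^{-n})$ directly, sparing us the Riemann-sum-to-Beta-function argument. Applying Lemma~\ref{2.cor2} termwise over the $2^{2n}-1$ subintervals, together with \eqref{A} and (D), yields $|B_n| \le C_\theta \zeta n^{\kappa} 2^{n(1-\beta)}$ and $|D_n| \le C\xi\zeta n^{\gamma+\kappa} 2^{2n(1-\beta)}$, exactly as in Theorem~\ref{theta1cons} but stripped of the weighted factor.

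The key new feature is that without weights $E_n$ telescopes to a single increment, $E_n = B^H_{T_n}$ with $T_n = 2^n - 2^{-n}$. Since this is a centered Gaussian with $\ex{E_n^2} = T_n^{2H} \le 2^{2nH}$, the same Borel--Cantelli plus Fernique argument used to control $\xi'$ in Theorem~\ref{theta1cons} produces a random variable $\xi''$ with all moments finite such that $|E_n| \le \xi'' n^{\delta} 2^{nH}$ a.s.\ for any prescribed $\delta > 0$. Dividing all three estimates by $|A_n|\ge c\cdot 2^n$ gives
$$
\frac{|B_n|}{|A_n|} \le C\zeta n^{\kappa} 2^{-n\beta},\quad \frac{|D_n|}{|A_n|} \le C\xi\zeta n^{\gamma+\kappa} 2^{n(1-2\beta)},\quad \frac{|E_n|}{|A_n|} \le C\xi'' n^{\delta} 2^{n(H-1)}.
$$

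Since $\beta > 1-\alpha > 1-H$ forces $-\beta < H-1$, the $B_n$ contribution is absorbed, and the worst of the two surviving exponents equals $-\rho$ with $\rho = (1-H) \wedge (2\beta - 1)$. Setting $\eta' \le C(\zeta + \xi\zeta + \xi'')$ gives $|\hat\theta_n^{(2)} - \theta| \le \eta' n^{\gamma+\kappa} 2^{-\rho n}$ with $\eta'$ having finite moments of all orders, and a.s.\ consistency is immediate from $\rho > 0$. The only step requiring any real care is the uniform lower bound on $A_n$; the telescoping of $E_n$ is a strict simplification over Theorem~\ref{theta1cons}, and every other estimate is a direct specialization of the weighted computation.
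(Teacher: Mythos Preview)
Your proof is correct and is exactly the simplification the paper has in mind when it says the argument ``is similar to that of Theorem~\ref{theta1cons}, but is much simpler'': the paper omits the proof entirely, and your decomposition $\hat\theta_n^{(2)}-\theta = (B_n+E_n+D_n)/A_n$ with unit weights, the direct lower bound $|A_n|\ge (M/K)(2^n-2^{-n})$ replacing the Beta-function Riemann-sum argument, and the telescoping $E_n = B^H_{2^n-2^{-n}}$ replacing the $L_{1/H}$-estimate are precisely the intended shortcuts. The remaining bounds on $B_n$, $D_n$, and $E_n/|A_n|$ reproduce the rates $2^{-n\beta}$, $2^{n(1-2\beta)}$, $2^{n(H-1)}$ just as in Theorem~\ref{theta1cons}, yielding the same $\rho = (1-H)\wedge(2\beta-1)$.
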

\begin{remark}
Using Theorem~\ref{fracderest}, it can be shown with some extra technical work that
\begin{equation}\label{thetarate}
\abs{\theta_n^{(i)}-\theta}\le \eta_1 n^{\mu}2^{- \tau n},\ i=1,2,
\end{equation}
where $\mu = 1/2 + \gamma(1+1/H)$, $\tau = (2H-1)\wedge (1-H)$; $\eta_1$ is a random variable, for which there exists some $c_\theta>0$  such that  $\ex{\exp\set{x\eta_1^{1+1/H}}}<\infty$ for $x<c_\theta$. Moreover, in order to estimate the estimators reliability, the constant $c_\theta$ can be computed explicitly in terms of $H,K,L,\theta$. However, we will not undertake this tedious task.
\end{remark}
\section{Simulations}\label{sec:simul}
In this section we illustrate quality of the estimators with the help of simulation experiments. For each set of parameters, we simulate 20 trajectories of the solution to \eqref{theta-sde}. Then for each of estimators $\theta_n^{(i)}$, $i=1,2$, we compute the average relative error ${\delta}_n^{(i)}$, i.e. the average of values $\abs{\theta_n^{(i)}-\theta}/\theta$.
We remind that for a particular value of $n$ we take $2^{2n}$ equidistant observations of the process on the interval $[0,2^n]$.

We start with a case of relatively ``tame'' coefficients $a(x) = 2\sin x + 3$, $b(x) = 2\cos x + 3$. We choose $\theta = 2$.

\renewcommand{\arraystretch}{1.5}
\setlength{\tabcolsep}{3pt}
\begin{table}
\begin{center}
\begin{tabular}{|c|c|c|c|c|c|c|c|c|}\hline
\multirow{2}{*}{$n$} & \multicolumn{2}{|c|}{$H=0.6$} & \multicolumn{2}{|c|}{$H=0.7$} & \multicolumn{2}{|c|}{$H=0.8$} & \multicolumn{2}{|c|}{$H=0.9$}\\\cline{2-9}
 & ${\delta}_n^{(1)}$ & ${\delta}_n^{(2)}$ & ${\delta}_n^{(1)}$ & ${\delta}_n^{(2)}$ & ${\delta}_n^{(1)}$ & ${\delta}_n^{(2)}$ & ${\delta}_n^{(1)}$ & ${\delta}_n^{(2)}$ \\\hline
3 & 0.093 & 0.093 & 0.097 & 0.094 & 0.098 & 0.096 & 	0.091 & 0.092\\
\hline 4 & 0.043 & 0.044 & 0.047 & 0.047 & 0.046 & 0.046 & 0.048 & 0.047\\\hline
5 & 0.025 & 0.024 & 0.027 & 0.027 & 0.029 & 0.029 & 0.028 & 0.028\\\hline
6 & 0.011 & 0.011 & 0.012 & 0.012 & 0.016 & 0.016 & 0.016 & 0.016\\
\hline
\end{tabular}
\caption{Relative errors of estimators $\theta_n^{(i)}$, $i=1,2$, for $a(x) = 2\sin x + 3$, $b(x) = 2\cos x + 3$, $\theta = 2$.}
\end{center}
\end{table}

The first observation is that the estimators have similar performance. This means that  $\theta_n^{(2)}$ is preferable to $\theta_n^{(1)}$, since it does not involve $H$ (which might be unknown) and is computable faster (for $n=6$, computation of $\theta_n^{(1)}$ takes $473$~microseconds on Intel Core i5-3210M processor, while that of $\theta_n^{(2)}$ takes $32$~microseconds).

The second observation is that the estimate \eqref{thetarate} of the convergence rate is probably not optimal; it seems that the convergence rate of convergence is around $2^{-n}$, in particular, it is independent of $H$.

Now take worse coefficients $a(x) = 2\sin x + 2.1$, $b(x) = 2\cos x + 2.1$; again $\theta = 2$.

\begin{table}
\begin{center}
\begin{tabular}{|c|c|c|c|c|c|c|c|c|}\hline
\multirow{2}{*}{$n$} & \multicolumn{2}{|c|}{$H=0.6$} & \multicolumn{2}{|c|}{$H=0.7$} & \multicolumn{2}{|c|}{$H=0.8$} & \multicolumn{2}{|c|}{$H=0.9$}\\\cline{2-9}
 & ${\delta}_n^{(1)}$ & ${\delta}_n^{(2)}$ & ${\delta}_n^{(1)}$ & ${\delta}_n^{(2)}$ & ${\delta}_n^{(1)}$ & ${\delta}_n^{(2)}$ & ${\delta}_n^{(1)}$ & ${\delta}_n^{(2)}$ \\\hline
3 & 0.17 & 0.18 & 0.18 & 0.19 & 0.18 & 0.18 & 0.17 & 0.17\\
\hline 4 & 0.096 & 0.097 & 0.099 & 0.102 & 0.099 & 0.106 & 0.095 & 0.099\\
\hline
5 & 0.045 & 0.045 & 0.052 & 0.052 & 0.051 & 0.053 & 0.046 & 0.046\\\hline
6 & 0.024 & 0.024 & 0.021 & 0.021 & 0.027 & 0.028 & 0.033 & 0.033\\
\hline
\end{tabular}
\caption{Relative errors of $\theta_n^{(i)}$, $i=1,2$, for $a(x) = 2\sin x + 2.1$, $b(x) = 2\cos x + 2.1$, $\theta = 2$.}
\end{center}
\end{table}

The relative errors have increased two to three times due to the coefficients approaching zero closer. Also observe that in this case the convergence rate seems better than the estimate \eqref{thetarate}.

Further we show that, despite condition (D) might seem too restrictive, certain condition that the coefficients are non-zero is required.

To illustrate this, take first $a(x) = 2\cos x+1$, $b(x) = 2\sin x + 3$, $\theta=2$. From the first sight, it seems that the estimators should work fine here. Such intuition is based on the observation that the proof of Theorem~\ref{theta1cons} relies on sufficiently fast convergence of the denominator to $+\infty$, which somehow should follow from the fact that positive values of the ratio $a(x)/b(x)$ are overwhelming. 
Unfortunately, this intuition is wrong. Here are ten values of the estimator $\theta_n^{(1)}$ for $H=0.7$, $n=6$:   $1.3152$, $0.6402$, $1.9676$, $0.9600$, $0.4627$, $4.7017$, $0.8386$, $0.8425$, $1.0247$, $0.3902$. Values of the estimator $\theta_n^{(2)}$ are also useless:  $0.7499$, $0.4081$, $1.0179$, $0.5725$, $0.2668$, $-3.1605$, $0.6556$, $0.4413$, $0.5586$, $0.2115$.

Now ``swap'' the roles of $a$ and $b$ by taking $a(x) = 2\cos x+3$, $b(x) = 2\sin x +1$  and keeping other parameters, i.e. $\theta=2$, $H=0.7$, $n=6$. As before, we compute ten values of the estimator $\theta_n^{(1)}$:   $1.5010$, $1.9824$, $2.0666$, $2.0087$, $1.6751$, $1.8802$, $2.1087$, $2.3519$, $2.0160$, $2.0442$; and ten values of  $\theta_n^{(2)}$:  $2.2076$, $1.9853$, $2.0975$, $2.0109$, $1.1202$, $1.8768$, $2.0964$, $2.6175$, $2.0176$, $2.045$. Although the performance of the estimators is mediocre, it has clearly  improved significantly compared to the previous case. We can conclude that sign changes of the coefficient $a$ to zero affect the performance much stronger than those of the coefficient $b$.

\end{document}